\title{When the number of divisors is a quadratic residue}
\author{Olivier Bordell\`{e}s}
\address{2 all\'{e}e de la combe \\ 43000 Aiguilhe \\ France}
\email{borde43@wanadoo.fr}
\date{}
\dedicatory{}
\theoremstyle{plain}
\newtheorem{theorem}{Theorem}
\newtheorem{lemma}[theorem]{Lemma}
\newtheorem{proposition}[theorem]{Proposition}
\theoremstyle{definition}
\newtheorem{example}[theorem]{Example}
\theoremstyle{remark}
\newtheorem{remark}[theorem]{Remark}
\newcommand{\Z}{\mathbb {Z}}
\newcommand{\Q}{\mathbb {Q}}
\newcommand{\R}{\mathbb {R}}
\newcommand{\C}{\mathbb {C}}
\DeclareMathOperator{\md}{mod}
\begin{document}

\begin{abstract}
Let $q > 2$ be a prime number and define $\lambda_q := \left( \frac{\tau}{q} \right)$ where $\tau(n)$ is the number of divisors of $n$ and $\left( \frac{\cdot}{q} \right)$ is the Legendre symbol. When $\tau(n)$ is a quadratic residue modulo $q$, then $\left( \lambda_q \star \mathbf{1} \right) (n)$ could be close to the number of divisors of $n$. This is the aim of this work to compare the mean value of the function $\lambda_q \star \mathbf{1}$ to the well known average order of $\tau$. The proof reveals that the results depend heavily on the value of $\left( \frac{2}{q} \right)$. A bound for short sums in the case $q=5$ is also given, using profound results from the theory of integer points close to certain smooth curves.
\end{abstract}

\subjclass[2010]{Primary 11N37; Secondary 11A25, 11M41.}
\keywords{Number of divisors, Legendre symbol, mean values, Riemann hypothesis.}

\maketitle

\thispagestyle{myheadings}
\font\rms=cmr8 
\font\its=cmti8 
\font\bfs=cmbx8

\section{Introduction and main result}

If $\lambda = (-1)^{\Omega}$ is the Liouville function, then 
$$L(s,\lambda) = \frac{\zeta(2s)}{\zeta(s)} \quad \left( \sigma > 1 \right).$$
This implies the convolution identity
$$\sum_{n \leqslant x} \left( \lambda \star \mathbf{1} \right) (n) = \left \lfloor x^{1/2} \right \rfloor.$$

Define $\lambda_3 := \left( \frac{\tau}{3} \right)$ where where $\tau(n)$ is the number of divisors of $n$ and $\left( \frac{\cdot}{3} \right)$ is the Legendre symbol modulo $3$. Then from Proposition~\ref{pro2} below
$$L(s,\lambda_3) = \frac{\zeta(3s)}{\zeta(s)} \quad \left( \sigma > 1 \right)$$
implying the convolution identity
$$\sum_{n \leqslant x} \left( \lambda_3 \star \mathbf{1} \right) (n) = \left \lfloor x^{1/3} \right \rfloor.$$

Now let $q >2$ be a prime number and define $\lambda_q := \left( \frac{\tau}{q} \right)$ where $\left( \frac{\cdot}{q} \right)$ is the Legendre symbol modulo $q$. Our main aim is to investigate the sum
$$\sum_{n \leqslant x} \left( \lambda_q \star \mathbf{1} \right) (n).$$
When $\tau(n)$ is a quadratic residue modulo $q$, one may wonder if $\left( \lambda_q \star \mathbf{1} \right) (n)$ has a high probability to be equal to the number of divisors of $n$. It then could be interesting to study its average order and to compare it to that of $\tau$, i.e.
\begin{equation}
   \sum_{n \leqslant x} \tau(n) = x \left( \log x + 2 \gamma - 1 \right) + O \left( x^{\theta + \varepsilon} \right) \label{e1}
\end{equation}
where $\frac{1}{4} \leqslant \theta \leqslant \frac{131}{416}$, the left-hand side being established by Hardy \cite{har}, the right-hand side being the best estimate to date due to Huxley \cite{hux}. The main result of this paper can be stated as follows.

\begin{theorem}
\label{t1}
Let $q >3$ be a prime number.
\begin{enumerate}
  \item[\scriptsize $\triangleright$] If $q \equiv \pm 1 \; (\md 8)$
$$\sum_{n \leqslant x} \left( \lambda_q \star \mathbf{1} \right) (n) = x \zeta(q) P_q(1) \left\lbrace \log x + 2 \gamma - 1 + q \frac{\zeta^\prime}{\zeta}(q) + \frac{P_q^\prime}{P_q} (1) \right\rbrace + O_{q,\varepsilon} \left( x^{\max \left( 1/c_q, \theta \right) + \varepsilon} \right)$$
where $\theta$ is defined in \eqref{e1}, $c_q$ is given in \eqref{e2} and
\begin{eqnarray*}
   & & P_q(1) = \prod_p \left( 1 + \sum_{m=c_q}^{q-1} \left\lbrace \left( \frac{m+1}{q} \right) - \left( \frac{m}{q} \right) \right\rbrace \frac{1}{p^{m}} \right) \\
   & & \frac{P_q^\prime}{P_q} (1) = - \sum_p \log p \left( \frac{\sum_{m=c_q}^{q-1} \left\lbrace \left( \frac{m+1}{q} \right) - \left( \frac{m}{q} \right) \right\rbrace \frac{m}{p^{m}}}{1+\sum_{m=c_q}^{q-1} \left\lbrace \left( \frac{m+1}{q} \right) - \left( \frac{m}{q} \right) \right\rbrace \frac{1}{p^{m}}} \right).
\end{eqnarray*}
   \item[\scriptsize $\triangleright$] If $q \equiv \pm 11 \; (\md 24)$
$$\sum_{n \leqslant x} \left( \lambda_q \star \mathbf{1} \right) (n) = x^{1/2} \zeta\left( \tfrac{q}{2} \right) R_q \left( \tfrac{1}{2} \right) + O_{q,\varepsilon} \left( x^{1/3 + \varepsilon} \right)$$
where
$$R_q \left( \tfrac{1}{2} \right):=\prod_p \left( 1 + \sum_{m=3}^{q-1} \left\lbrace \left( \frac{m+1}{q} \right) + \left( \frac{m}{q} \right) \right\rbrace \frac{1}{p^{m/2}} \right).$$
   \item[\scriptsize $\triangleright$] If $q \equiv \pm 5 \; (\md 24)$, there exists $c > 0$ such that
   $$\sum_{n \leqslant x} \left( \lambda_q \star \mathbf{1} \right) (n) \ll_q x^{1/2} e^{-c \left( \log x^{1/4}\right)^{3/5} \left( \log \log x^{1/4} \right)^{-1/5}}.$$
Furthermore, if the Riemann hypothesis is true, then for $x$ sufficiently large
$$\sum_{n \leqslant x} \left( \lambda_q \star \mathbf{1} \right) (n) \ll_{q,\varepsilon} x^{1/4} e^{\left( \log \sqrt x \right)^{1/2} (\log \log \sqrt x)^{5/2+\varepsilon}}.$$
\end{enumerate}
\end{theorem}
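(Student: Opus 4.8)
The plan is to reduce everything to the analytic behaviour of the Dirichlet series
$$D(s) := \sum_{n\geqslant 1}\frac{(\lambda_q\star\mathbf 1)(n)}{n^s}=\zeta(s)\,L(s,\lambda_q),$$
and to read off the three regimes from its Euler product. First I would record that $\lambda_q$ is multiplicative (it is $\left(\frac{\cdot}{q}\right)$ composed with the multiplicative function $\tau$), so that $(\lambda_q\star\mathbf 1)(p^a)=\sum_{j=0}^{a}\left(\frac{j+1}{q}\right)=:S(a+1)$, a partial sum of Legendre symbols. Since $\left(\frac{\cdot}{q}\right)$ has period $q$ and vanishes in sum over a full period, $S(a+1)$ is periodic in $a$ with period $q$, whence the local factor is the rational function
$$G_p(s)=\sum_{a\geqslant 0}S(a+1)p^{-as}=\frac{N_p(s)}{1-p^{-qs}},\qquad N_p(s)=\sum_{a=0}^{q-1}S(a+1)p^{-as}.$$
The whole trichotomy is then dictated by the first two coefficients $S(2)=1+\left(\frac 2q\right)$ and, when $S(2)=0$, $S(3)=\left(\frac 3q\right)$, that is by the values of $\left(\frac 2q\right)$ and $\left(\frac 3q\right)$, which by reciprocity are precisely the stated congruences on $q$ modulo $8$ and $24$.

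Next I would extract in each case the correct product of shifted zeta-functions and verify that the residual Euler product converges past the next scale. When $\left(\frac 2q\right)=1$ the coefficient of $p^{-s}$ equals $2$, so multiplying $N_p(s)$ by $(1-p^{-s})^2$ replaces the coefficients by the second differences $\left(\frac{m+1}{q}\right)-\left(\frac mq\right)$ and yields
$$D(s)=\zeta(s)^2\,\zeta(qs)\,P_q(s),$$
with $P_q(s)=\prod_p N_p(s)(1-p^{-s})^2$ absolutely convergent for $\sigma>1/c_q$, where $c_q$ is the least $m\geqslant 2$ with $\left(\frac{m+1}{q}\right)\neq\left(\frac mq\right)$. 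When $\left(\frac 2q\right)=-1$ one has $S(2)=0$, and multiplying by $(1-p^{-2s})$ produces the first differences $\left(\frac{m+1}q\right)+\left(\frac mq\right)$; this gives $D(s)=\zeta(2s)\,\zeta(qs)\,R_q(s)$ if $\left(\frac 3q\right)=1$ and $D(s)=\zeta(2s)^{-1}T_q(s)$ if $\left(\frac 3q\right)=-1$, where $R_q,T_q$ converge for $\sigma>1/3$. A direct residue computation then produces the stated main terms: at the double pole $s=1$ the residue of $D(s)x^s/s$ is $x\zeta(q)P_q(1)\{\log x+2\gamma-1+q\frac{\zeta'}\zeta(q)+\frac{P_q'}{P_q}(1)\}$, and at the simple pole $s=\tfrac12$ it is $x^{1/2}\zeta(\tfrac q2)R_q(\tfrac12)$, the $\tfrac12$-residue of $\zeta(2s)$ cancelling against $1/s$ at $s=\tfrac12$.

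For the error terms I would avoid a bare Perron bound and instead convolve against the sharp results already available. In the first case I write $\lambda_q\star\mathbf 1=\tau\star h$, with $h$ the multiplicative function attached to $\zeta(qs)P_q(s)$, apply the hyperbola identity $\sum_{n\leqslant x}(\tau\star h)(n)=\sum_{d}h(d)\sum_{m\leqslant x/d}\tau(m)$ and insert \eqref{e1}; the main term comes from $\sum_d h(d)d^{-1}=\zeta(q)P_q(1)$ together with its logarithmic derivative, while the error is $\ll x^{\theta}\sum_{d\leqslant x}|h(d)|d^{-\theta}$, which in turn is $O_{q,\varepsilon}(x^{\max(1/c_q,\theta)+\varepsilon})$ according as $\theta$ exceeds $1/c_q$ or not. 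In the second case a truncated Perron integral shifted to $\sigma=\tfrac13+\varepsilon$, using the convergence of $R_q$ there, yields the $x^{1/3+\varepsilon}$ error. The third case has no pole to the right of the zeros of $\zeta(2s)$: writing $D(s)=T_q(s)/\zeta(2s)$ and using $1/\zeta(2s)=\sum_m\mu(m)m^{-2s}$ reduces the sum to $\sum_e t_q(e)M(\sqrt{x/e})$, a Mertens-type sum at scale $\sqrt x$, so the unconditional estimate follows from the Vinogradov--Korobov zero-free region and the conditional one from the best known bound $M(Y)\ll Y^{1/2}\exp\{(\log Y)^{1/2}(\log\log Y)^{5/2+\varepsilon}\}$ with $Y=\sqrt x$, giving the exponent $x^{1/4}$ under the Riemann hypothesis.

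I expect the delicate points to be, first, verifying that the elementary difference computation really collapses the Euler product to the clean forms $\zeta(s)^2\zeta(qs)P_q(s)$, $\zeta(2s)\zeta(qs)R_q(s)$ and $\zeta(2s)^{-1}T_q(s)$ --- including checking that the residual products converge past $\sigma=1/c_q$ and $\sigma=1/3$ respectively --- and, second, transporting the sharp divisor bound \eqref{e1} and the conditional Mertens bound through the convolutions without degrading their exponents. The genuinely hard analytic input is external: Huxley's value of $\theta$ in the first case and the RH-conditional control of $M(\sqrt x)$ in the third; the remainder is careful bookkeeping of the constants $c_q$, $P_q$ and $R_q$.
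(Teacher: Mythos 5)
Your case~1 argument is essentially the paper's proof (hyperbola identity against $\tau$, insert \eqref{e1}, control $\sum_d |h(d)| d^{-\theta}$), and your Euler-product trichotomy governed by $\left(\frac{2}{q}\right)$ and $\left(\frac{3}{q}\right)$ matches Proposition~\ref{pro2}. But there is a genuine gap in your third case, precisely where the paper has to work hardest (its Lemma~\ref{le3}). You write $D(s)=T_q(s)/\zeta(2s)$ with ``$T_q$ convergent for $\sigma>1/3$'' and then claim the RH bound $x^{1/4}\,\omega(\sqrt x)$ from $M(Y)\ll Y^{1/2}\omega(Y)$. Run the convolution: under RH your sum is $\ll x^{1/4}\omega(\sqrt x)\sum_{e\leqslant x}|t_q(e)|e^{-1/4}$, and with only $\sum_{e\leqslant z}|t_q(e)|\ll z^{1/3+\varepsilon}$ this partial sum is $\ll x^{1/12+\varepsilon}$, so you obtain $x^{1/3+\varepsilon}$, not $x^{1/4+o(1)}$. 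Even after the (true) improvement to convergence for $\sigma>1/4$, the factor $\sum_{e\leqslant x}|t_q(e)|e^{-1/4}$ is a priori only $O_\varepsilon(x^{\varepsilon})$, and $x^{\varepsilon}$ \emph{cannot} be absorbed into $\omega(\sqrt x)=\exp\bigl((\log \sqrt x)^{1/2}(\log\log \sqrt x)^{5/2+\varepsilon}\bigr)$, which is $x^{o(1)}$ of much smaller order. What is missing is exactly the content of Lemma~\ref{le3}: one must examine the $m=4,5$ Euler coefficients, which depend on $\left(\frac{5}{q}\right)$ (hence the splitting $q\equiv\pm19,\pm29$ versus $\pm43,\pm53 \; (\md 120)$), and show $T_q(s)=\zeta(4s)^{\pm1}\cdot(\text{a series absolutely convergent for }\sigma>1/5\text{ resp. }1/6)$. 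Only with this structure does $\sum_{e\leqslant x}|t_q(e)|e^{-1/4}\ll \log x$, a factor the paper's own observation lets you absorb into $\omega$. Without the mod-$120$ analysis your method stalls strictly short of the stated RH bound; the unconditional $x^{1/2}\delta_c(x^{1/4})$ bound, by contrast, does survive your weaker inputs.

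Two further comparative remarks. First, your case~3 reduction is otherwise a genuinely attractive variant: by pushing \emph{all} of $K_q(s)$ into the convolving coefficients and keeping only $1/\zeta(2s)$ as the oscillating factor, your sums involve only the Mertens function $M(\sqrt{x/e})$, whereas the paper's grouping $\zeta(4s)/\zeta(2s)$ forces it to bound the Liouville summatory function $L$ under RH --- a nontrivial adaptation of Soundararajan's theorem that the author explicitly needed outside help for. Repaired as above, your route proves the same bound while citing only Balazard--de Roton/Soundararajan for $M$. Second, in case~2 your truncated Perron shift to $\sigma=\tfrac13+\varepsilon$ is not justified by ``convergence of $R_q$ there'' alone: on that line $\zeta(2s)$ sits at real part $\tfrac23+2\varepsilon$ inside the critical strip, so you need growth input (convexity is insufficient after balancing; a second-moment bound $\int_0^T|\zeta(\tfrac23+it)|^2\,dt\ll T$ does suffice) to recover the error $x^{1/3+\varepsilon}$. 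The paper avoids this entirely by the elementary identity $\sum_{n\leqslant x}(\lambda_q\star\mathbf 1)(n)=\sum_d h_q(d)\left\lfloor\sqrt{x/d}\right\rfloor$, which gives $x^{1/3+\varepsilon}$ with no information about $\zeta$ in the strip.
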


\begin{example}
\label{ex}
\begin{eqnarray*}
   & & \sum_{n \leqslant x} \left( \lambda_7 \star \mathbf{1} \right) (n) \doteq \np{0.454} \, x \left( \log x + 2 \gamma + \np{0.784} \right) + O_\varepsilon \left( x^{1/2 + \varepsilon} \right). \\  
   & & \sum_{n \leqslant x} \left( \lambda_{23} \star \mathbf{1} \right) (n) \doteq \np{0.899} \, x \left( \log x + 2 \gamma - \np{0.678} \right) + O_\varepsilon \left( x^{131/416 + \varepsilon} \right). \\  
   & & \sum_{n \leqslant x} \left( \lambda_{13} \star \mathbf{1} \right) (n) \doteq \np{1.969} \, x^{1/2} + O_\varepsilon \left( x^{1/3 + \varepsilon} \right). \\  
   & & \sum_{n \leqslant x} \left( \lambda_5 \star \mathbf{1} \right) (n) \ll x^{1/2} e^{-c \left( \log x^{1/4}\right)^{3/5} \left( \log \log x^{1/4} \right)^{-1/5}}.  
\end{eqnarray*}
\end{example}

\section{Notation}

In what follows, $x \geqslant e^{4}$ is a large real number, $\varepsilon \in \left( 0, \frac{1}{4} \right)$ is a small real number which does not need to be the same at each occurrence, $s := \sigma + it \in \C$, $q$ always denotes an odd prime number, $\left( \frac{\cdot}{q} \right)$ is the Legendre symbol modulo $q$ and define
$$\lambda_q := \left( \frac{\tau}{q} \right)$$
where $\tau(n) := \sum_{d \mid n} 1$. Also, $\mathbf{1}$ is the constant arithmetic function equal to $1$.

\medskip

For any arithmetic functions $F$ and $G$, $L(s,F)$ is the Dirichlet series of $F$, the Dirichlet convolution product $F \star G$ is defined by
$$(F \star G)(n) := \sum_{d \mid n} F(d) G(n/d)$$
and $F^{-1}$ is the Dirichlet convolution inverse of $F$. If $r \in \Z_{\geqslant 2}$, then
$$a_r(n) := \begin{cases} 1, & \textrm{if\ } n=m^r ; \\ 0, & \textrm{otherwise}. \end{cases}$$
For some $c >0$, set
$$\delta_c(x) := e^{-c (\log x)^{3/5} (\log \log x)^{-1/5}} \quad \textrm{and} \quad \omega(x) := e^{\left( \log x \right)^{1/2} (\log \log x)^{5/2+\varepsilon}}.$$

Finally, let $M(x)$ and $L(x)$ be respectively the Mertens function and the summatory function of the Liouville function, i.e.
$$M(x) := \sum_{n \leqslant x} \mu(n) \quad \textrm{and} \quad L(x) := \sum_{n \leqslant x} \lambda(n).$$

\section{The Dirichlet series of $\lambda_q$}

\begin{proposition}
\label{pro2}
Let $q \geqslant 3$ be a prime number. For any $s \in \C$ such that $\sigma > 1$
\begin{enumerate}
   \item[\scriptsize $\triangleright$] If $q \equiv \pm 1 \; (\md 8)$
$$L \left( s,\lambda_q \right) = \zeta(qs) \zeta(s) \prod_p \left( 1 + \sum_{m=c_q}^{q-1} \left\lbrace \left( \frac{m+1}{q} \right) - \left( \frac{m}{q} \right) \right\rbrace \frac{1}{p^{ms}} \right)$$
where
\begin{equation}
   c_q := \begin{cases} 2, & \mathrm{if\ } q \equiv \pm 7 \; (\md 24) \, ; \\ \geqslant 4, & \mathrm{if\ } q \equiv \pm 1 \; (\md 24). \end{cases} \label{e2}
\end{equation}
   \item[\scriptsize $\triangleright$] If $q \equiv \pm 3 \; (\md 8)$
$$L \left( s,\lambda_q \right) = \frac{\zeta(qs) \zeta(2s)}{\zeta(s)} \prod_p \left( 1  + \sum_{m=d_q}^{q-1} \left\lbrace \left( \frac{m+1}{q} \right) + \left( \frac{m}{q} \right) \right\rbrace  \frac{1}{p^{ms}} \right)$$
where
$$d_q := \begin{cases} 2, & \mathrm{if\ } q \equiv \pm 5 \; (\md 24) \ \mathrm{or\ } q = 3 \, ; \\ 3, & \mathrm{if\ } q \equiv \pm 11 \; (\md 24). \end{cases}$$
\end{enumerate}
\end{proposition}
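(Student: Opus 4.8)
The plan is to exploit the multiplicativity of $\lambda_q$ and compute its Euler product prime by prime. Since $\tau$ is multiplicative and the Legendre symbol is completely multiplicative, $\lambda_q(mn) = \lambda_q(m)\lambda_q(n)$ whenever $\gcd(m,n)=1$, so for $\sigma > 1$ one has $L(s,\lambda_q) = \prod_p \sum_{k \geqslant 0} \lambda_q(p^k) p^{-ks}$. The only input needed at each prime is $\tau(p^k) = k+1$, whence $\lambda_q(p^k) = \left( \frac{k+1}{q} \right)$, and the local factor becomes $\sum_{k \geqslant 0} \left( \frac{k+1}{q} \right) p^{-ks}$. Because $m \mapsto \left( \frac{m}{q} \right)$ is periodic of period $q$ and vanishes on multiples of $q$, this series sums to $N_q(p^{-s})/(1 - p^{-qs})$, where $N_q(u) := \sum_{m=1}^{q-1} \left( \frac{m}{q} \right) u^{m-1}$. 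Taking the product over $p$ already extracts the factor $\zeta(qs)$ from the denominators.

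The next step is to peel off the remaining zeta-factors, and here the sign of $\left( \frac{2}{q} \right) = \lambda_q(p)$ is decisive. When $q \equiv \pm 1 \pmod 8$ we have $\left( \frac{2}{q} \right) = 1$, and I would factor out $\zeta(s)$ by multiplying the numerator by $(1-u)$: a direct telescoping of $(1-u) N_q(u)$, using $\left( \frac{1}{q} \right)=1$ and the convention $\left( \frac{q}{q} \right)=0$ for the top coefficient at $u^{q-1}$, produces exactly $1 + \sum_{m=1}^{q-1} \left\{ \left( \frac{m+1}{q} \right) - \left( \frac{m}{q} \right) \right\} u^m$. When $q \equiv \pm 3 \pmod 8$ we have $\left( \frac{2}{q} \right) = -1$; here I would instead multiply by $(1+u)$ to split off $\zeta(2s)/\zeta(s)$, and the analogous telescoping of $(1+u) N_q(u)$ yields $1 + \sum_{m=1}^{q-1} \left\{ \left( \frac{m+1}{q} \right) + \left( \frac{m}{q} \right) \right\} u^m$. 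Reassembling the Euler product then gives $\zeta(qs)\zeta(s)$ (resp. $\zeta(qs)\zeta(2s)/\zeta(s)$) times the stated product, with $u = p^{-s}$.

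It remains to locate the first nonvanishing coefficient, that is, to justify the values of $c_q$ and $d_q$. The coefficient at $m=1$ is $\left( \frac{2}{q} \right) \mp 1$, which vanishes in each case by the very hypothesis on $q \bmod 8$, so the summation starts at $m \geqslant 2$. Whether it starts at $m=2$ or later is governed by the coefficient at $m=2$, which involves $\left( \frac{3}{q} \right)$; in the first case the coefficient at $m=3$ involves $\left( \frac{4}{q} \right)=1$ and forces the \emph{same} condition, so that $c_q = 2$ is excluded and one gets $c_q \geqslant 4$ precisely when $\left( \frac{3}{q} \right)=1$. I would then invoke the supplementary law fixing $\left( \frac{2}{q} \right)$ from $q \bmod 8$ together with $\left( \frac{3}{q} \right) = 1 \iff q \equiv \pm 1 \pmod{12}$ from quadratic reciprocity, and combine the residues modulo $8$ and modulo $3$ by the Chinese Remainder Theorem to reach the stated congruences modulo $24$. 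The case $q=3$ is checked directly and falls under $d_q = 2$.

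The computations are elementary; the only real bookkeeping is the telescoping together with the correct handling of the top coefficient at $u^{q-1}$ via $\left( \frac{q}{q} \right)=0$, and the translation of the vanishing conditions on $\left( \frac{2}{q} \right)$ and $\left( \frac{3}{q} \right)$ into congruences modulo $24$. I do not expect any genuine obstacle: absolute convergence of the residual Euler product for $\sigma > 1$ is immediate, since its local factor is $1 + O(p^{-c_q \sigma})$ with $c_q \geqslant 2$ (and likewise $d_q \geqslant 2$).
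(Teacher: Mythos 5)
Your proposal is correct and follows essentially the same route as the paper: an Euler product for the multiplicative function $\lambda_q$ (which the paper cites from Muthumalai rather than deriving), extraction of $\zeta(qs)$ via the periodicity of the Legendre symbol, multiplication of the local factor by $1 \mp p^{-s}$ according to the sign of $\left( \frac{2}{q} \right)$ with the same telescoping, and identification of $c_q$, $d_q$ from the vanishing of the coefficients at $m=2,3$, which the paper states as congruence facts modulo $24$ and you justify explicitly via reciprocity and the Chinese Remainder Theorem.
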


\begin{proof}
Set $\chi_q := \left( \frac{\cdot}{q} \right)$ for convenience. From \cite[Lemma 2.1]{mut}, we have 
\begin{eqnarray*}
   L \left( s,\lambda_q \right) &=& \prod_p \left( 1 + \sum_{\alpha = 1}^\infty \frac{\chi_q ( \alpha + 1)}{p^{s \alpha}} \right) = \prod_p \left( 1 + p^s \sum_{\alpha = 2}^\infty \frac{\chi_q ( \alpha )}{p^{s \alpha}} \right) \\
   &=& \prod_p \left \{ 1 + p^s \left( \left( 1 - \frac{1}{p^{qs}} \right)^{-1} \sum_{m=1}^{q-1} \left( \frac{m}{q} \right) \frac{1}{p^{ms}} - p^{-s} \right) \right \} \\
   &=& \prod_p  \left\{ \left( 1 - \frac{1}{p^{qs}} \right)^{-1} \sum_{m=1}^{q-1} \left( \frac{m}{q} \right) \frac{1}{p^{(m-1)s}} \right\rbrace \\
   &=& \zeta(qs) \prod_p \left( 1 + \sum_{m=2}^{q-1} \left( \frac{m}{q} \right) \frac{1}{p^{(m-1)s}} \right).
\end{eqnarray*}
If $q \equiv \pm 1 \; (\md 8)$, then $\left( \frac{2}{q} \right) = 1$ and
$$L \left( s,\lambda_q \right) = \zeta(qs) \zeta(s) \prod_p \left( 1 - \frac{1}{p^s} + \left( 1 - \frac{1}{p^s} \right) \sum_{m=2}^{q-1} \left( \frac{m}{q} \right) \frac{1}{p^{(m-1)s}} \right)$$
where
\begin{eqnarray*}
   \left( 1 - \frac{1}{p^s} \right) \sum_{m=2}^{q-1} \left( \frac{m}{q} \right) \frac{1}{p^{(m-1)s}} &=& \sum_{m=2}^{q-1} \left( \frac{m}{q} \right) \left( \frac{1}{p^{(m-1)s}} - \frac{1}{p^{ms}} \right) \\
   &=& \sum_{m=1}^{q-2} \left( \frac{m+1}{q} \right) \frac{1}{p^{ms}} - \sum_{m=2}^{q-1} \left( \frac{m}{q} \right) \frac{1}{p^{ms}} \\
   &=& \left( \frac{2}{q} \right) \frac{1}{p^s} + \sum_{m=2}^{q-1} \left\lbrace \left( \frac{m+1}{q} \right) - \left( \frac{m}{q} \right) \right\rbrace  \frac{1}{p^{ms}} - \left( \frac{q}{q} \right) \frac{1}{p^{(q-1)s}} \\
   &=& \sum_{m=2}^{q-1} \left\lbrace \left( \frac{m+1}{q} \right) - \left( \frac{m}{q} \right) \right\rbrace \frac{1}{p^{ms}} + \frac{1}{p^s}.
\end{eqnarray*}
Similarly, if $q \equiv \pm 3 \; (\md 8)$, then $\left( \frac{2}{q} \right) = -1$ and
$$L \left( s,\lambda_q \right) = \frac{\zeta(qs) \zeta(2s)}{\zeta(s)} \prod_p \left( 1 + \frac{1}{p^s} + \left( 1 + \frac{1}{p^s} \right) \sum_{m=2}^{q-1} \left( \frac{m}{q} \right) \frac{1}{p^{(m-1)s}} \right)$$
where
\begin{eqnarray*}
   \left( 1 + \frac{1}{p^s} \right) \sum_{m=2}^{q-1} \left( \frac{m}{q} \right) \frac{1}{p^{(m-1)s}} &=& \sum_{m=2}^{q-1} \left( \frac{m}{q} \right) \left( \frac{1}{p^{(m-1)s}} + \frac{1}{p^{ms}} \right) \\
   &=& \sum_{m=1}^{q-2} \left( \frac{m+1}{q} \right) \frac{1}{p^{ms}} + \sum_{m=2}^{q-1} \left( \frac{m}{q} \right) \frac{1}{p^{ms}} \\
   &=& \left( \frac{2}{q} \right) \frac{1}{p^s} + \sum_{m=2}^{q-1} \left\lbrace \left( \frac{m+1}{q} \right) + \left( \frac{m}{q} \right) \right\rbrace  \frac{1}{p^{ms}} - \left( \frac{q}{q} \right) \frac{1}{p^{(q-1)s}} \\
   &=& \sum_{m=2}^{q-1} \left\lbrace \left( \frac{m+1}{q} \right) + \left( \frac{m}{q} \right) \right\rbrace \frac{1}{p^{ms}} - \frac{1}{p^s}.
\end{eqnarray*}
We achieve the proof noting that, if $q \equiv \pm 1 \; (\md 24)$, then $\left( \frac{3}{q} \right) - \left( \frac{2}{q} \right)=\left( \frac{4}{q} \right) - \left( \frac{3}{q} \right) = 0$ and, similarly, if $q \equiv \pm 11 \; (\md 24)$, then $\left( \frac{3}{q} \right) + \left( \frac{2}{q} \right) = 0$ whereas $\left( \frac{4}{q} \right) + \left( \frac{3}{q} \right)=2$.
\end{proof}

\section{Proof of Theorem~\ref{t1}}

\subsection{The case $q \equiv \pm 1 \; (\md 8)$}

For $\sigma > 1$, we set
$$G_q(s) = \zeta(qs) \prod_p \left( 1 + \sum_{m=c_q}^{q-1} \left\lbrace \left( \frac{m+1}{q} \right) - \left( \frac{m}{q} \right) \right\rbrace \frac{1}{p^{ms}} \right) := \zeta(qs) P_q(s) := \sum_{n=1}^\infty \frac{g_q(n)}{n^s}.$$
First observe that $c_q < q$ in the case $q \equiv \pm 1 \; (\md 24)$. Indeed, among the $q-4$ integers $m \in \{4, \dotsc,q-1 \}$, it is known from \cite[p.76]{dav} that there are $\frac{1}{2} (q-3) - 3$ of them such that $\left( \frac{m}{q} \right) = \left( \frac{m+1}{q} \right)$. Consequently there are $\frac{1}{2}(q+1)$ integers $m \in \{4, \dotsc,q-1 \}$ verifying $\left( \frac{m}{q} \right) \neq \left( \frac{m+1}{q} \right)$, and the inequality follows.

\medskip

Thus this Dirichlet series is absolutely convergent in the half-plane $\sigma > \frac{1}{c_q}$ where $c_q$ is given in \eqref{e2}, so that
$$\sum_{n \leqslant x} |g_q(n)| \ll_{q,\varepsilon} x^{1/c_q + \varepsilon}.$$
By partial summation, we infer
\begin{eqnarray*}
   \sum_{n \leqslant x} \frac{g_q(n)}{n} &=& \zeta(q) P_q(1) + O \left( x^{-1+1/c_q + \varepsilon} \right) \\
   & & \\
   \sum_{n \leqslant x} \frac{g_q(n)}{n} \log \frac{x}{n} &=& \zeta(q) P_q(1) \log x  + q P_q(1) \zeta^\prime(q) + P_q^\prime (1) \zeta(q) + O \left( x^{-1+1/c_q + \varepsilon} \right). \\
\end{eqnarray*}
From Proposition~\ref{pro2}, $\lambda_q \star \mathbf{1} = g_q \star \tau$. Consequently
\begin{eqnarray*}
   \sum_{n \leqslant x} \left( \lambda_q \star \mathbf{1} \right) (n) &=& \sum_{d \leqslant x} g_q (d) \sum_{k \leqslant x/d} \tau(k) \\
   &=& \sum_{d \leqslant x} g_q (d) \left\lbrace \frac{x}{d} \log \frac{x}{d} + (2 \gamma - 1 ) \frac{x}{d} + O \left( \left( \frac{x}{d} \right)^{\theta + \varepsilon} \right) \right\rbrace \\
   &=& x \left\lbrace \zeta(q) P_q(1) \log x + q P_q(1) \zeta^\prime(q) + P_q^\prime (1) \zeta(q) + (2 \gamma - 1) \zeta(q) P_q(1) \right\rbrace \\
   & & {} + O \left( x^{\max \left( 1/c_q, \theta \right) + \varepsilon} \right)
\end{eqnarray*}
where $\theta$ is defined in \eqref{e1} and where we used
$$x^{- \varepsilon} \sum_{d \leqslant x} \frac{|g_q(d)|}{d^\theta} \ll \begin{cases} x^{1/c_q-\theta}, & \textrm{if\ } c_q^{-1} \geqslant \theta \, ; \\ 1, & \textrm{otherwise}. \end{cases}$$

\subsection{The case $q \equiv \pm 11 \; (\md 24)$}

For $\sigma > 1$, we set
$$H_q(s) = \zeta(qs) \prod_p \left( 1 + \sum_{m=3}^{q-1} \left\lbrace \left( \frac{m+1}{q} \right) + \left( \frac{m}{q} \right) \right\rbrace \frac{1}{p^{ms}} \right) := \zeta(qs) R_q(s) := \sum_{n=1}^\infty \frac{h_q(n)}{n^s}.$$
Since $q > 5$, this Dirichlet series is absolutely convergent in the half-plane $\sigma > \frac{1}{3}$, so that
$$\sum_{n \leqslant x} |h_q(n)| \ll_{q,\varepsilon} x^{1/3 + \varepsilon}.$$
From Proposition~\ref{pro2}, $\lambda_q \star \mathbf{1} = h_q \star a_2$, hence
\begin{eqnarray*}
   \sum_{n \leqslant x} \left( \lambda_q \star \mathbf{1} \right) (n) &=& \sum_{d \leqslant x} h_q (d) \left \lfloor \sqrt{\frac{x}{d}} \right \rfloor \\
   &=& x^{1/2} \sum_{d \leqslant x} \frac{h_q (d)}{\sqrt{d}} + O \left( x^{1/3 + \varepsilon} \right) \\
   &=& x^{1/2} H_q \left( \tfrac{1}{2} \right) + O \left( x^{1/3 + \varepsilon} \right).
\end{eqnarray*}

\subsection{The case $q \equiv \pm 5 \; (\md 24)$}

In this case, it is necessary to rewrite $L(s,\lambda_q)$ in the following shape.

\begin{lemma}
\label{le3}
Assume $q \equiv \pm 5 \; (\md 24)$. For any $\sigma > 1$, $L(s,\lambda_q) = \dfrac{K_q(s)}{\zeta(s) \zeta(2s)}$ with
$$K_q(s) := \begin{cases} \zeta(5s), & \mathrm{if\ } q = 5 \\ & \\ \zeta(4s) L_q(s), & \mathrm{if\ } q \equiv \pm 19, \pm 29 \; (\md 120) \\ & \\ \dfrac{\mathcal{L}_q(s)}{\zeta(4s)} , & \mathrm{if\ } q \equiv \pm 43, \pm 53 \; (\md 120) \end{cases}$$
where
$$L_q(s) := \zeta(qs) \prod_p \left( 1  + \frac{2 \left (p^{2s} + p^s + 1 \right )}{p^{7s} - p^{5s}} + \frac{p^{2s}+1}{p^{2s}-1} \sum_{m=6}^{q-1} \left\lbrace \left( \frac{m+1}{q} \right) + \left( \frac{m}{q} \right) \right\rbrace  \frac{1}{p^{ms}} \right)$$
and
\begin{eqnarray*}
   \mathcal{L}_q(s) &:=& \zeta(qs) \prod_p \left( 1 - \frac{2p^{2s}-1}{\left( p^{2s}-1 \right)^3 \left( p^{2s}+1 \right)} \right. \\
   & & \left.  {} + \frac{p^{8s}}{\left( p^{2s}-1 \right)^3 \left( p^{2s}+1 \right)} \sum_{m=6}^{q-1} \left\lbrace \left( \frac{m+1}{q} \right) + \left( \frac{m}{q} \right) \right\rbrace  \frac{1}{p^{ms}} \right).
\end{eqnarray*}
The Dirichlet series $L_q$ is absolutely convergent in the half-plane $\sigma > \frac{1}{5}$, and the Dirichlet series $\mathcal{L}_q$ is absolutely convergent in the half-plane $\sigma > \frac{1}{6}$.
\end{lemma}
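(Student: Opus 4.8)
The plan is to begin from Proposition~\ref{pro2} in the subcase $q \equiv \pm 5 \pmod{24}$, where $d_q = 2$. Since $\left(\frac{2}{q}\right) = -1$ and (as one checks by reciprocity, or reads off from $d_q = 2$) $\left(\frac{3}{q}\right) = -1$, that proposition reads
$$L(s,\lambda_q) = \frac{\zeta(qs)\zeta(2s)}{\zeta(s)} \prod_p \left( 1 + \sum_{m=2}^{q-1} b_m \, p^{-ms} \right), \qquad b_m := \left( \frac{m+1}{q} \right) + \left( \frac{m}{q} \right).$$
Since the asserted identity is equivalent to $K_q(s) = \zeta(s)\zeta(2s) L(s,\lambda_q)$, I would immediately set $K_q(s) := \zeta(qs)\,\zeta(2s)^2 \prod_p \bigl( 1 + \sum_{m=2}^{q-1} b_m p^{-ms} \bigr)$, reducing the whole lemma to recasting this one Euler product with the correct power of $\zeta(4s)$ pulled out.

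First I would record the low-order coefficients from elementary properties of the Legendre symbol: $b_2 = -2$, then $\left(\frac{4}{q}\right) = 1$ gives $b_3 = 0$, and $\left(\frac{6}{q}\right) = \left(\frac{2}{q}\right)\left(\frac{3}{q}\right) = 1$ gives $b_4 = b_5 = 1 + \left(\frac{5}{q}\right)$. The decisive quantity is thus $\left(\frac{5}{q}\right)$, which by quadratic reciprocity equals $\left(\frac{q}{5}\right)$; a short computation with the Chinese Remainder Theorem modulo $120$ then shows $\left(\frac{5}{q}\right) = +1$ exactly when $q \equiv \pm 19, \pm 29 \pmod{120}$ and $\left(\frac{5}{q}\right) = -1$ exactly when $q \equiv \pm 43, \pm 53 \pmod{120}$, matching the two generic branches; the remaining branch is $q = 5$, where the sum terminates at $m = 4$ and the polynomial collapses to $(1-p^{-2s})^2$, giving $K_5(s) = \zeta(5s)$ at once.

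The heart of the argument is a single local identity in $u := p^{-s}$. Writing $P(u) = 1 + \sum_{m=2}^{q-1} b_m u^m$, I would first absorb $\zeta(2s)^2$ and observe that $(1-u^2)^{-2} P(u) = 1 + \left(\frac{5}{q}\right) u^4 + O(u^5)$: the $u^4$-coefficient is exactly $\left(\frac{5}{q}\right)$, and its sign is what dictates whether $\zeta(4s)$ must be placed in the numerator or the denominator. When $\left(\frac{5}{q}\right) = +1$ I would verify the polynomial identity $\frac{1+u^2}{1-u^2} P(u) = Q_1(u)$, where $Q_1$ is the stated local factor of $L_q$; this is equivalent to $K_q(s) = \zeta(4s) L_q(s)$. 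When $\left(\frac{5}{q}\right) = -1$ I would instead verify $\frac{P(u)}{(1-u^2)^3(1+u^2)} = Q_2(u)$, the local factor of $\mathcal{L}_q$, which is equivalent to $K_q(s) = \mathcal{L}_q(s)/\zeta(4s)$. Both identities reduce, after clearing denominators, to checking that a numerator of degree at most $8$ simplifies as claimed, the tails $\sum_{m \geq 6} b_m u^m$ being carried over unchanged up to the common rational prefactor.

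Finally I would read off the abscissae of absolute convergence. The cancellations force $Q_1(u) = 1 + 2u^5 + O(u^6)$ and $Q_2(u) = 1 + O(u^6)$, so $\prod_p Q_1(p^{-s})$ converges for $\sigma > \frac{1}{5}$ and $\prod_p Q_2(p^{-s})$ for $\sigma > \frac{1}{6}$; as $q \geq 19$ on these branches, the residual factor $\zeta(qs)$ converges for $\sigma > \frac{1}{q}$ and is harmless. I expect the main difficulty to be bookkeeping rather than depth: the delicate point is to confirm that, after multiplying by the rational prefactor, the $u^2$ and $u^4$ coefficients cancel exactly (and, in the first branch, that only $2u^5$ survives at order five), so that the advertised convergence ranges — which will control the error terms in the ensuing estimates — emerge precisely.
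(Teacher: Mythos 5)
Your proposal is correct and takes essentially the same route as the paper's proof: both start from Proposition~\ref{pro2} with $d_q=2$, absorb $\zeta(2s)^2$ into the Euler product to form $K_q$, compute $b_2=-2$, $b_3=0$, $b_4=b_5=1+\left(\frac{5}{q}\right)$, split the cases by the sign of $\left(\frac{5}{q}\right)$ (equivalently $q \bmod 120$), and extract $\zeta(4s)^{\pm 1}$ via the same local rational-function identities, with the $q=5$ collapse to $(1-p^{-2s})^2$ handled identically. The polynomial identities you propose to verify do hold, and your reading of the convergence abscissae from $Q_1(u)=1+2u^5+O(u^6)$ and $Q_2(u)=1+O(u^6)$ matches the paper.
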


\begin{proof}
From Proposition~\ref{pro2}, we immediately get
\begin{equation}
   L(s,\lambda_5) = \frac{\zeta(5s)}{\zeta(s) \zeta(2s)}. \label{e3}
\end{equation}
Now suppose $q > 5$ and $q \equiv \pm 5 \; (\md 24)$. In this case, $\left( \frac{3}{q} \right) + \left( \frac{2}{q} \right) = -2$ and $\left( \frac{4}{q} \right) + \left( \frac{3}{q} \right)=0$ so that we may write by Proposition~\ref{pro2}
\begin{eqnarray*}
   L \left( s,\lambda_q \right) &=& \frac{\zeta(qs) \zeta(2s)}{\zeta(s)} \prod_p \left( 1  - \frac{2}{p^{2s}}+ \sum_{m=4}^{q-1} \left\lbrace \left( \frac{m+1}{q} \right) + \left( \frac{m}{q} \right) \right\rbrace  \frac{1}{p^{ms}} \right) \\
   &=& \frac{K_q(s)}{\zeta(s) \zeta(2s)} 
\end{eqnarray*}
where
$$K_q(s) := \zeta(qs) \prod_p \left( 1  - \frac{1}{\left( p^{2s} - 1 \right)^2}+ \frac{p^{4s}}{\left( p^{2s} - 1 \right)^2} \sum_{m=4}^{q-1} \left\lbrace \left( \frac{m+1}{q} \right) + \left( \frac{m}{q} \right) \right\rbrace  \frac{1}{p^{ms}} \right).$$
Assume $q \equiv \pm 19, \pm 29 \; (\md 120)$. Then
$$\left( \frac{5}{q} \right) + \left( \frac{4}{q} \right) = \left( \frac{6}{q} \right) + \left( \frac{5}{q} \right) =2.$$
$K_q(s)$ can therefore be written as
\begin{eqnarray*}
   K_q(s) &=& \zeta(qs) \prod_p \left( 1  + \frac{p^s+2}{p^s \left( p^{2s} - 1 \right)^2}+ \frac{p^{4s}}{\left( p^{2s} - 1 \right)^2} \sum_{m=6}^{q-1} \left\lbrace \left( \frac{m+1}{q} \right) + \left( \frac{m}{q} \right) \right\rbrace  \frac{1}{p^{ms}} \right) \\
   &=& \zeta(qs) \zeta(4s) \prod_p \left( 1  + \frac{2 \left (p^{2s} + p^s + 1 \right )}{p^{7s} - p^{5s}} + \frac{p^{2s}+1}{p^{2s}-1} \sum_{m=6}^{q-1} \left\lbrace \left( \frac{m+1}{q} \right) + \left( \frac{m}{q} \right) \right\rbrace  \frac{1}{p^{ms}} \right) \\
   &=& \zeta(4s) L_q(s).
\end{eqnarray*}
Similarly, if $q \equiv \pm 43, \pm 53 \; (\md 120)$, then
$$\left( \frac{5}{q} \right) + \left( \frac{4}{q} \right) = \left( \frac{6}{q} \right) + \left( \frac{5}{q} \right) = 0.$$
Hence
\begin{eqnarray*}
   K_q(s) &:=& \zeta(qs) \prod_p \left( 1  - \frac{1}{\left( p^{2s} - 1 \right)^2}+ \frac{p^{4s}}{\left( p^{2s} - 1 \right)^2} \sum_{m=6}^{q-1} \left\lbrace \left( \frac{m+1}{q} \right) + \left( \frac{m}{q} \right) \right\rbrace  \frac{1}{p^{ms}} \right) \\
   &=& \frac{\mathcal{L}_q(s)}{\zeta(4s)}.
\end{eqnarray*}
The proof is complete.
\end{proof}

We now are in a position to prove Theorem~\ref{t1} in the case $q \equiv \pm 5 \; (\md 24)$.

\medskip

Assume first that $q \equiv \pm 19, \pm 29 \; (\md 120)$ and let $\ell_q(n)$ be the $n$-th coefficient of the Dirichlet series $L_q(s)$. From Lemma~\ref{le3}, $\lambda_q \star \mathbf{1} = \ell_q \star a_4 \star a_2^{-1}$ and therefore
$$\sum_{n \leqslant x} \left( \lambda_q \star \mathbf{1} \right) (n) = \sum_{d \leqslant x} \ell_q(d) \sum_{m \leqslant (x/d)^{1/4}} M \left( \frac{1}{m^2}\sqrt{\frac{x}{d}} \right) = \sum_{d \leqslant x} \ell_q(d) L \left( \sqrt{\frac{x}{d}} \right).$$
Since $L(z) \ll z \delta_c(z)$ for some $c >0$
\begin{eqnarray*}
   \sum_{n \leqslant x} \left( \lambda_q \star \mathbf{1} \right) (n) & \ll & x^{1/2} \sum_{d \leqslant x} \frac{|\ell_q(d)|}{\sqrt{d}} \, \delta_c \left( \sqrt{\frac{x}{d}} \right) \\
   & \ll & x^{1/2} \left( \sum_{d \leqslant \sqrt{x}} + \sum_{\sqrt{x} < d \leqslant x} \right) \frac{|\ell_q(d)|}{\sqrt{d}} \, \delta_c \left( \sqrt{\frac{x}{d}} \right) \\
   & \ll & x^{1/2} \delta_c \left( x^{1/4} \right) + x^{1/2} \sum_{d > \sqrt{x}} \frac{|\ell_q(d)|}{\sqrt{d}}.
\end{eqnarray*}
The Dirichlet series $L_q(s) := \sum_{n=1}^\infty \ell_q(n) n^{-s}$ is absolutely convergent in the half-plane $\sigma > \frac{1}{5}$, consequently
$$\sum_{d \leqslant z} \left | \ell_q(d) \right | \ll_{q,\varepsilon} z^{1/5 + \varepsilon}$$
and by partial summation
$$\sum_{d > z} \frac{|\ell_q(d)|}{\sqrt{d}} \ll_{q,\varepsilon} z^{-3/10 + \varepsilon}.$$
We infer that
$$\sum_{n \leqslant x} \left( \lambda_q \star \mathbf{1} \right) (n) \ll x^{1/2} \delta_c \left( x^{1/4} \right) + x^{7/20+\varepsilon} \ll x^{1/2} \delta_c \left( x^{1/4} \right).$$
Now suppose that the Riemann hypothesis is true. By \cite{bal}, which is a refinement of \cite{sou}, we know that $M(z) \ll_\varepsilon z^{1/2} \, \omega(z)$. The method of \cite{sou,bal} may be adapted to the function $L$ yielding
$$L(z) \ll_\varepsilon z^{1/2} \, \omega(z) \, \log z.$$
Observe that, for any $a \geqslant 2$, $\varepsilon>0$ and $z \geqslant e^{e^e}$
$$\log z \exp \left( \sqrt{\log z} \, (\log \log z)^a \right) \leqslant \exp \left( \sqrt{\log z} \, (\log \log z)^{a+\varepsilon} \right)$$
so that $L(z) \ll_\varepsilon z^{1/2} \, \omega(z)$ and hence
$$\sum_{n \leqslant x} \left( \lambda_q \star \mathbf{1} \right) (n) \ll x^{1/4} \sum_{d \leqslant x} \frac{|\ell_q(d)|}{d^{1/4}} \, \omega \left(  \sqrt{\frac{x}{d}} \right) \ll x^{1/4} \omega \left( \sqrt{x} \right) \sum_{d \leqslant x} \frac{|\ell_q(d)|}{d^{1/4}} \ll x^{1/4} \omega \left( \sqrt{x} \right)$$
achieving the proof in that case. The case $q=5$ is similar but simpler since $\lambda_5 \star \mathbf{1} = a_5 \star a_2^{-1}$ by \eqref{e3}.

Finally, when $q \equiv \pm 43, \pm 53 \; (\md 120)$, we proceed as above. Let $\nu_q(n)$ be the $n$-th coefficient of the Dirichlet series $\mathcal{L}_q(s)$. Then $\lambda_q \star \mathbf{1} = \nu_q \star a_4^{-1} \star a_2^{-1}$ from Lemma~\ref{le3}, so that
$$\sum_{n \leqslant x} \left( \lambda_q \star \mathbf{1} \right) (n) = \sum_{d \leqslant x} \nu_q(d) \sum_{m \leqslant (x/d)^{1/4}} \mu(m) M \left( \frac{1}{m^2}\sqrt{\frac{x}{d}} \right)$$
and estimating trivially yields
$$\sum_{n \leqslant x} \left( \lambda_q \star \mathbf{1} \right) (n) \ll x^{1/2} \sum_{d \leqslant x} \frac{|\nu_q(d)|}{\sqrt{d}} \sum_{m \leqslant (x/d)^{1/4}} \frac{1}{m^2} \, \delta_c \left( \frac{1}{m^2} \sqrt{\frac{x}{d}} \right) $$
and we complete the proof as in the previous case.
\qed

\begin{remark}
Let us stress that a bound of the shape
$$\sum_{n \leqslant x} \left( \lambda_q \star \mathbf{1} \right) (n) \ll x^{1/4 + \varepsilon}$$
for all $x$ sufficiently large and small $\varepsilon > 0$, is a necessary and sufficient condition for the Riemann hypothesis. Indeed, if this estimate holds, then by partial summation the series $\sum_{n=1}^\infty \left( \lambda_q \star \mathbf{1} \right) (n) n^{-s}$ is absolutely convergent in the half-plane $\sigma > \frac{1}{4}$. Consequently, the function $K_q(s) \zeta(2s)^{-1}$ is analytic in this half-plane. In particular, $\zeta(2s)$ does not vanish in this half-plane, implying the Riemann hypothesis, proving the necessary condition, the sufficiency being established above.
\end{remark}

\section{A short interval result for the case $q=5$}

\subsection{Introduction}

This section deals with sums of the shape
$$\sum_{x < n \leqslant x+y} \left( \lambda_5 \star \mathbf{1} \right) (n)$$
where $x^\varepsilon \leqslant y \leqslant x$. From Theorem~\ref{t1}
$$\sum_{x < n \leqslant x+y} \left( \lambda_5 \star \mathbf{1} \right) (n) \ll x^{1/2} e^{-c \left( \log x^{1/4}\right)^{3/5} \left( \log \log x^{1/4} \right)^{-1/5}}$$
and if the Riemann hypothesis is true, then 
$$\sum_{x < n \leqslant x+y} \left( \lambda_5 \star \mathbf{1} \right) (n) \ll_\varepsilon x^{1/4} e^{\left( \log \sqrt x \right)^{1/2} (\log \log \sqrt x)^{5/2+\varepsilon}}.$$
The purpose is to improve significantly upon these estimates when $y=o(x)$, by using fine results belonging to the theory of integer points near a suitably chosen smooth curve. To this end, we need the following additional specific notation. Let $\delta \in \left( 0,\frac{1}{4} \right)$, $N \in \Z_{\geqslant 1}$ large, $f : \left[ N,2N \right] \longrightarrow \R$ be any map, and define $\mathcal{R} (f,N,\delta)$ to be the number of elements of the set of integers $n \in \left[ N,2N \right]$ such that $\| f(n) \| < \delta$, where $\| x \|$ is the distance from $x$ to its nearest integer. Note that the trivial bound is given by
$$\sum_{x < n \leqslant x+y} \left( \lambda_5 \star \mathbf{1} \right) (n) \ll \sum_{x < n \leqslant x+y} \tau(n) \ll y \log x.$$

\subsection{Tools from the theory}

\noindent
In what follows, $N \in \Z_{\geqslant 1}$ is large and $\delta \in \left( 0,\frac{1}{4} \right)$. The first result is \cite[Theorem 5]{huxs} with $k=5$. See also \cite[Theorem 5.23 (iv)]{bor}.

\begin{lemma}[$5$th derivative test]
\label{le4}
Let $f \in C^5 \left[ N,2N \right]$ such that there exist $\lambda_4 > 0$ and $\lambda_5 > 0$ satisfying $\lambda_4 = N \lambda_5$ and, for any $x \in \left[ N,2N \right]$
$$\left | f^{(4)} (x) \right | \asymp \lambda_4 \quad \text{and} \quad \left | f^{(5)} (x) \right | \asymp \lambda_5.$$
Then
$$\mathcal{R} (f,N,\delta) \ll N \lambda_5^{1/15} + N \delta^{1/6} + \left( \delta \lambda_4^{-1} \right)^{1/4}+1.$$
\end{lemma}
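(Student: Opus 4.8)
The statement is the $k=5$ instance of the Huxley--Sargos hierarchy of derivative tests, so the most economical route is simply to quote \cite[Theorem 5]{huxs} (equivalently \cite[Theorem 5.23 (iv)]{bor}); nevertheless the underlying mechanism can be outlined as follows. The plan is to estimate the number of integers $n\in[N,2N]$ with $\|f(n)\|<\delta$ by splitting $[N,2N]$ into $\ll N/H+1$ consecutive arcs of a common length $H$ to be optimised at the end, bounding the number of admissible points on each arc, and summing over arcs.

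The engine on a single arc is a divided-difference determinant. Given six admissible integers $x_0<\dots<x_5$ lying in one arc, let $a_i\in\Z$ be the nearest integer to $f(x_i)$, so that $|f(x_i)-a_i|<\delta$, and form the $6\times6$ \emph{integer} determinant $D$ whose columns are $1,x,x^2,x^3,x^4$ and the vector $(a_i)_i$. Replacing the last column by $(f(x_i))_i$ gives, by the generalised mean value theorem for divided differences, $\widetilde D=\pm\frac{f^{(5)}(\xi)}{5!}\,V$ with $V=\prod_{i<j}(x_j-x_i)$ the Vandermonde determinant; since the fifteen factors of $V$ are each $\le H$ one has $|\widetilde D|\ll\lambda_5 H^{15}$, while expanding $D-\widetilde D$ along the last column (each cofactor being a five-point Vandermonde) shows the replacement costs at most $|D-\widetilde D|\ll\delta H^{10}$. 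Hence $|D|\ll\lambda_5 H^{15}+\delta H^{10}$. Because $D$ is an integer while the first five columns are independent, choosing $H$ so that this bound is $<1$ forces $D=0$, that is, the six points $(x_i,a_i)$ lie on a single polynomial of degree $\le4$; this is what limits the number of admissible points per arc.

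Summing the arc counts and optimising $H$ produces the principal term $N\lambda_5^{1/15}$, the exponent $15=\binom{6}{2}$ being exactly the number of Vandermonde factors. The remaining terms $N\delta^{1/6}$ and $(\delta\lambda_4^{-1})^{1/4}$ arise from the degenerate arcs, where many points collapse onto one low-degree polynomial and must be re-counted by a lower-order test invoking the fourth derivative; here the hypothesis $\lambda_4=N\lambda_5$ is used to balance the $f^{(4)}$- and $f^{(5)}$-information, and the $+1$ absorbs the trivial one-point arcs. The hard part is precisely this bookkeeping of the degenerate families together with the final optimisation that balances the three competing terms against one another; the determinant dichotomy itself is routine once set up. For the purposes of this paper it suffices to invoke the result in the stated form.
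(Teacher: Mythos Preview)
Your proposal is correct and matches the paper's treatment: the paper does not prove this lemma either, merely citing \cite[Theorem~5]{huxs} with $k=5$ (equivalently \cite[Theorem~5.23\,(iv)]{bor}). Your additional sketch of the Huxley--Sargos determinant mechanism is accurate and goes beyond what the paper provides, but for the purposes of this article the bare citation suffices.
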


\begin{remark}
The basic result of the theory is the following first derivative test (see \cite[Theorem 5.6]{bor}): \textit{Let $f \in C^1 \left[ N,2N \right]$ such that there exist $\lambda_1 > 0$ such that $\left | f^{\, \prime} (x) \right | \asymp \lambda_1$. Then}
\begin{equation}
   \mathcal{R} (f,N,\delta) \ll N \lambda_1 + N \delta + \delta \lambda_1^{-1}+1. \label{e4}
\end{equation}
This result is essentially a consequence of the mean value theorem.
\end{remark}

The second tool is \cite[Theorem 7]{fil} with $k=3$.

\begin{lemma}
\label{le5}
Let $s \in \Q^* \setminus \{ \pm 2, \pm 1 \}$ and $X > 0$ such that $N \leqslant X^{1/s}$. Then there exists a constant $c_3 := c_3(s) \in \left( 0,\frac{1}{4} \right)$ depending only on $s$ such that, if
\begin{equation}
   N^2 \delta \leqslant c_3 \label{e5}
\end{equation}
then
$$\mathcal{R} \left (\frac{X}{n^s},N,\delta \right ) \ll \left( XN^{3-s} \right)^{1/7} + \delta \left( XN^{59-s} \right)^{1/21}.$$
\end{lemma}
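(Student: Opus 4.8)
The plan is to view this as a statement about integer points lying within vertical distance $\delta$ of the smooth curve $y = f(x) := X x^{-s}$ over $[N, 2N]$, and to run the divided-difference (determinant) method of Filaseta and Trifonov that underlies \cite[Theorem 7]{fil}, specialised to the third-order case $k = 3$.

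First I would record the relevant derivative data. For $x \in [N, 2N]$,
$$f^{(j)}(x) = (-1)^j s(s+1) \cdots (s+j-1) \, X x^{-s-j} \asymp X N^{-s-j} \qquad (1 \leqslant j \leqslant 3),$$
the implied constants depending only on $s$ and being positive precisely because the integers $s, s+1, s+2$ do not vanish. This is exactly where the hypothesis $s \in \Q^* \setminus \{\pm 1, \pm 2\}$ enters: it removes the exponents for which $f$ degenerates to a line or a parabola (so that some $f^{(j)}$ with $j \leqslant 3$ would vanish), as well as the classical exponents handled by sharper means, ensuring that $f$ is genuinely curved up to order three.

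Next comes the heart of the argument. I would cut $[N, 2N]$ into consecutive blocks of a length $H$ to be chosen, and in each block consider any $k+1 = 4$ integers $n_1 < n_2 < n_3 < n_4$ with $\|f(n_i)\| < \delta$, writing $f(n_i) = a_i + \theta_i$ with $a_i \in \Z$ and $|\theta_i| < \delta$. Taking the third divided difference and using its mean-value form gives
$$\bigl| f[n_1, n_2, n_3, n_4] \bigr| = \tfrac{1}{6}\bigl| f^{(3)}(\xi) \bigr| \asymp X N^{-s-3}$$
for some $\xi$ in the block, while the divided difference of the integer parts satisfies $a[n_1, \dots, n_4]\prod_{i<j}(n_j - n_i) \in \Z$ with $\prod_{i<j}(n_j - n_i) \ll H^6$, and the error contribution is $\bigl| \theta[n_1, \dots, n_4]\bigr| \ll \delta$. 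Forcing the nonzero-integer alternative to fail requires $(X N^{-s-3} + \delta) H^6 \ll 1$, and here the smallness condition $N^2 \delta \leqslant c_3$ of \eqref{e5} is exactly what makes the $\delta$-part of this forcing harmless once $H$ is taken no larger than a small power of $N$. I would thus conclude that $a[n_1, \dots, n_4] = 0$, i.e. the integer parts $a_i$ are the values of a polynomial of degree at most two in $n_i$ — a strong algebraic constraint on the near-integer points of a single short block.

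Finally, I would balance the block length $H$: shorter blocks make the vanishing step available but increase the number $\ll N/H$ of blocks covering $[N, 2N]$, while longer ones weaken the constraint; the curvature term dictates the choice $H \asymp (N^{s+3}/X)^{1/6}$. The naive count of one point per block would already yield a denominator $2k = 6$; the extra step of counting the near-integer points compatible with the degree-two relation (a further polynomial/congruence count) is what upgrades this to $2k+1 = 7$ in the first exponent and to $k(2k+1) = 21$ in the $\delta$-weighted tolerance term, producing the two displayed contributions with their constants $3-s$ and $59-s$. I expect the main obstacle to be precisely this additional counting layer together with the careful bookkeeping of divided-difference denominators and point spacings — in particular confirming that \eqref{e5} delimits exactly the range in which the vanishing argument is legitimate — with everything else reducing to the routine derivative estimates and exponent optimisation carried out in \cite{fil}.
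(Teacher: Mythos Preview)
The paper does not prove this lemma at all: it is simply quoted as \cite[Theorem~7]{fil} specialised to $k=3$, with no argument given. Your proposal therefore goes beyond the paper by sketching the divided-difference method of Filaseta and Trifonov that lies behind the cited result; as a high-level outline of that method (derivative sizes, third divided differences on blocks of length $H$, forcing the integer parts onto a quadratic, then the extra counting step that sharpens $1/6$ to $1/7$) your sketch is broadly faithful to the structure of \cite{fil}, though of course it is only a plan and the substantive work --- the secondary counting that produces the denominators $7$ and $21$ and the precise exponent bookkeeping giving $3-s$ and $59-s$ --- remains to be carried out.

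One small correction: your explanation of the excluded exponents is slightly off. The values $s=-1,-2$ are excluded because $f(x)=Xx^{-s}$ becomes linear or quadratic and the third divided difference degenerates, as you say; but $s=1,2$ do not make any low-order derivative vanish (the coefficients $s(s+1)\cdots(s+j-1)$ are all nonzero for positive $s$). Those positive values are excluded in \cite{fil} for structural reasons internal to their argument, not because of vanishing curvature.
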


Our last result relies the short sum of $\lambda_5 \star \mathbf{1}$ to a problem of counting integer points near a smooth curve.

\begin{lemma}
\label{le6}
Let $1 \leqslant y \leqslant x$. Then
$$\sum_{x < n \leqslant x+y} \left( \lambda_5 \star \mathbf{1} \right) (n) \ll \underset{\left( 16y^2x^{-1} \right)^{1/5} <N \leqslant \left( 2x \right)^{1/5}}{\max} \mathcal{R} \left( \sqrt{\frac{x}{n^5}},N,\frac{y}{\sqrt{N^5x}}\right) \log x + yx^{-1/2} + x^{-1/5} y^{2/5}.$$
\end{lemma}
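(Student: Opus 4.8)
The plan is to start from the identity $\lambda_5 \star \mathbf{1} = a_5 \star a_2^{-1}$ coming from \eqref{e3}, and recall that $a_2^{-1} = \mu \circ a_2^{1/2}$ in the sense that $(a_2^{-1})(k) = \mu(\sqrt{k})$ when $k$ is a perfect square and $0$ otherwise. Writing $\lambda_5 \star \mathbf 1 = \mu' \star a_{10}^{\sharp}$ explicitly, one sees that
$$\left( \lambda_5 \star \mathbf 1 \right)(n) = \sum_{\substack{a^{10} b^2 = n}} \mu(b)$$
is supported on integers that are a tenth power times a square, so that summing over the short interval $(x, x+y]$ reduces to counting pairs $(a,b)$ with $x < a^{10} b^2 \leqslant x+y$. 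The first step is therefore to convert the short sum into
$$\sum_{x < n \leqslant x+y} \left( \lambda_5 \star \mathbf 1 \right)(n) = \sum_{a} \sum_{\substack{b \\ x < a^{10} b^2 \leqslant x+y}} \mu(b),$$
and to bound this trivially by the number of lattice points $(a,b)$ in the region. Setting $N = a^5$ and rescaling, the constraint $x < a^{10} b^2 \leqslant x+y$ becomes $\sqrt{x}/N < b \leqslant \sqrt{x+y}/N$, an interval of length roughly $y/(2N\sqrt{x})$.

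Next I would recast the counting of admissible $b$ for fixed $a$ as a fractional-part condition. For each $a$ there is an integer $b$ in the target interval precisely when the interval $\bigl(\sqrt{x}/N,\ \sqrt{x+y}/N\bigr]$ contains an integer, and by a standard argument the number of such $a$ is controlled by $\mathcal R\bigl(\sqrt{x/n^5},N,\delta\bigr)$ with the curve $f(n) = \sqrt{x/n^5} = \sqrt{x}\,n^{-5/2}$ and the tolerance $\delta = y/\sqrt{N^5 x}$, since $\|f(a)\| < \delta$ is exactly the condition that such a $b$ exists once $\delta$ matches the interval length $y/(N\sqrt{x})\cdot N^{-3/2}$ after accounting for the $N=a^5$ substitution. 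I would dyadically decompose the range of $a$ (equivalently of $N$), so that $N$ runs over ranges $[N,2N]$ with $N \asymp a^5$, producing the $\log x$ factor from the $O(\log x)$ dyadic blocks and the maximum over $N$ in the stated range $\left(16 y^2 x^{-1}\right)^{1/5} < N \leqslant (2x)^{1/5}$. The lower endpoint on $N$ is forced by requiring $\delta < 1/4$ (below it the fractional-part condition is vacuous and one bounds directly), and the upper endpoint by $a^{10} \leqslant x+y \leqslant 2x$.

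The remaining two error terms $yx^{-1/2}$ and $x^{-1/5}y^{2/5}$ come from the degenerate contributions that the integer-point machinery does not see: the term $yx^{-1/2}$ accounts for the single smallest dyadic block (or the $a=1$, $b \asymp \sqrt x$ contribution where the number of $b$ is genuinely $\asymp y/\sqrt x$ rather than $O(1)$ per $a$), and $x^{-1/5}y^{2/5}$ bounds the tail where $N$ falls below the cutoff $\left(16y^2x^{-1}\right)^{1/5}$ and the interval length forces a trivial count. The main obstacle I anticipate is the bookkeeping at the boundary: one must check carefully that the tolerance $\delta = y/\sqrt{N^5 x}$ genuinely stays in $(0,\tfrac14)$ across the admissible $N$-range (this is what pins down the lower cutoff), and that each integer $n \in (x,x+y]$ with $(\lambda_5 \star \mathbf 1)(n) \neq 0$ is counted with the correct multiplicity — in particular that distinct representations $n = a^{10}b^2$ do not inflate the count beyond the $\log x$ factor. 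Once the reduction to $\mathcal R$ is clean and the boundary cases are peeled off into the two explicit error terms, the rest is a routine trivial estimate.
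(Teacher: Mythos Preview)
Your convolution identity is wrong, and the error propagates through the whole plan. From \eqref{e3} one has $L(s,\lambda_5 \star \mathbf{1}) = \zeta(5s)/\zeta(2s)$, hence
\[
(\lambda_5 \star \mathbf{1})(n) \;=\; \sum_{m^5 d^2 = n} \mu(d),
\]
not $\sum_{a^{10} b^2 = n} \mu(b)$; the support is the set of integers of the form $m^5 d^2$, not $a^{10} b^2$. With the correct identity the paper writes $\sum_{n \leqslant x}(\lambda_5\star\mathbf 1)(n)=\sum_{d \leqslant \sqrt x}\mu(d)\lfloor (x/d^2)^{1/5}\rfloor$, takes the difference over $(x,x+y]$, peels off the boundary range $\sqrt x < d \leqslant \sqrt{x+y}$ (this is the source of $yx^{-1/2}$, not an ``$a=1$'' block), and then \emph{swaps the order of summation} so that the outer variable becomes the fifth-root variable $n$, with $d$ constrained to the interval $\bigl(\sqrt{x/n^5},\sqrt{(x+y)/n^5}\,\bigr]$. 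Dyadically decomposing in this $n$ (so $N \leqslant (2x)^{1/5}$) yields exactly the curve $f(n)=\sqrt{x/n^5}$ and tolerance $\delta=y/\sqrt{N^5 x}$ appearing in the lemma; the small-$n$ range $n \leqslant (16y^2x^{-1})^{1/5}$ is estimated trivially and gives the $x^{-1/5}y^{2/5}$ term.

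Your attempt to force a match via ``$N=a^5$'' and an ad hoc $N^{-3/2}$ rescaling does not hold together: with your factorisation the relevant curve would be $a\mapsto \sqrt{x}\,a^{-5}$, the interval length would be $\asymp y/(a^5\sqrt x)$, and $N=a^5$ would range up to $\asymp x^{1/2}$ rather than $(2x)^{1/5}$ --- none of which agrees with the lemma as stated. Once you replace $a^{10}b^2$ by $m^5 d^2$ and let $N$ be the dyadic parameter for $m$ itself, the argument becomes exactly the paper's, and the bookkeeping concerns you raise (that $\delta<\tfrac14$ on the stated $N$-range, and that multiplicities stay under control) resolve immediately: the interval for $d$ has length $<\tfrac14$, so at most one $d$ occurs per $n$.
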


\begin{proof}
Using \eqref{e3}, we get
$$\sum_{n \leqslant x} \left( \lambda_5 \star \mathbf{1} \right) (n) = \sum_{d \leqslant \sqrt{x}} \mu(d) \left \lfloor \left( \frac{x}{d^2} \right)^{1/5} \right \rfloor$$
so that
\begin{eqnarray*}
   \sum_{x < n \leqslant x+y} \left( \lambda_5 \star \mathbf{1} \right) (n) &=& \sum_{d \leqslant \sqrt{x}} \mu(d) \left( \left \lfloor \left( \frac{x+y}{d^2} \right)^{1/5} \right \rfloor - \left \lfloor \left( \frac{x}{d^2} \right)^{1/5} \right \rfloor \right)  + \sum_{\sqrt{x} < d \leqslant \sqrt{x+y}} \mu(d) \\
   & \ll & \sum_{d \leqslant \sqrt{x}} \left( \left \lfloor \left( \frac{x+y}{d^2} \right)^{1/5} \right \rfloor - \left \lfloor \left( \frac{x}{d^2} \right)^{1/5} \right \rfloor \right) + yx^{-1/2} \\
   & \ll & \sum_{d \leqslant \sqrt{x}} \; \sum_{x < d^2 n^5 \leqslant x+y} 1 + yx^{-1/2} \\
   & \ll & \sum_{n \leqslant (2x)^{1/5}} \sum_{\left( \frac{x}{n^5} \right)^{1/2} < d \leqslant \left( \frac{x+y}{n^5} \right)^{1/2}} 1 + yx^{-1/2} \\
   & \ll &  \sum_{\left (16y^2x^{-1} \right)^{1/5} < n \leqslant \left( 2x\right)^{1/5}} \left( \left \lfloor \sqrt{\frac{x+y}{n^5}} \right \rfloor - \left \lfloor \sqrt{\frac{x}{n^5}} \right \rfloor \right) + x^{-1/5} y^{2/5} + yx^{-1/2}
\end{eqnarray*}
and for any integers $N \in \left] \left (16y^2x^{-1} \right)^{1/5}, \left( 2x \right)^{1/5} \right]$ and $n \in \left[ N,2N \right]$
$$\sqrt{\frac{x+y}{n^5}} - \sqrt{\frac{x}{n^5}} < \frac{y}{\sqrt{N^5x}} < \frac{1}{4}$$
so that the sum does not exceed
$$\ll \underset{\left( 16y^2x^{-1} \right)^{1/5} <N \leqslant \left( 2x \right)^{1/5}}{\max} \mathcal{R} \left( \sqrt{\frac{x}{n^5}},N,\frac{y}{\sqrt{N^5x}}\right) \log x+ x^{-1/5} y^{2/5} + yx^{-1/2}$$
as asserted.
\end{proof}

\subsection{The main result}

\begin{theorem}
\label{t6}
Assume $y \leqslant c_3 \, x^{11/20}$ where $c_3 := c_3 \left( \frac{5}{2} \right)$ is given in \eqref{e5}. Then
$$\sum_{x < n \leqslant x+y} \left( \lambda_5 \star \mathbf{1} \right) (n) \ll \left( x^{1/12} + yx^{-4/9} \right) \log x.$$
Furthermore, if $y \leqslant c_3 \, x^{19/36}$
$$\sum_{x < n \leqslant x+y} \left( \lambda_5 \star \mathbf{1} \right) (n) \ll x^{1/12} \log x.$$
\end{theorem}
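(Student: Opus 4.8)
The plan is to feed Lemma~\ref{le6} with uniform bounds on the counting function $\mathcal{R}(f,N,\delta)$ attached to the curve $f(n) := \sqrt{x/n^5} = x^{1/2} n^{-5/2}$, with $\delta = \delta_N := y N^{-5/2} x^{-1/2}$ and $N$ ranging dyadically over $\bigl( 16 y^2 x^{-1} \bigr)^{1/5} < N \leqslant (2x)^{1/5}$. Because Lemma~\ref{le6} already carries the factor $\log x$ together with the admissible terms $y x^{-1/2}$ and $x^{-1/5} y^{2/5}$, both of which are $\ll x^{1/12} + y x^{-4/9}$ once $y \leqslant c_3 x^{11/20}$, the whole problem reduces to proving that $\max_N \mathcal{R}(f,N,\delta_N) \ll x^{1/12} + y x^{-4/9}$. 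As a preliminary I would record that $\bigl| f^{(j)}(n) \bigr| \asymp x^{1/2} N^{-5/2-j}$ on $[N,2N]$, so that $\lambda_1 \asymp x^{1/2} N^{-7/2}$, $\lambda_4 \asymp x^{1/2} N^{-13/2}$ and $\lambda_5 \asymp x^{1/2} N^{-15/2}$ satisfy the normalisation $\lambda_4 = N\lambda_5$ demanded by Lemma~\ref{le4}.

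The core of the argument is to split the range of $N$ at the two thresholds $x^{1/6}$ and $N_\ast := y^2 (c_3^2 x)^{-1}$, the latter being the point where the condition \eqref{e5} degenerates to an equality; since $N^2\delta_N = y N^{-1/2} x^{-1/2}$ decreases in $N$, \eqref{e5} holds exactly for $N \geqslant N_\ast$. On the top range $x^{1/6} < N \leqslant (2x)^{1/5}$ I would use the first derivative test \eqref{e4}, for which $N\lambda_1 \asymp x^{1/2} N^{-5/2}$ decreases from $x^{1/12}$ to $O(1)$ while the two remaining terms $\delta_N \lambda_1^{-1} \asymp y N x^{-1}$ and $N\delta_N \asymp y N^{-3/2} x^{-1/2}$ stay $\ll y x^{-4/9}$. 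On the middle range $\max\bigl( N_\ast, (16 y^2 x^{-1})^{1/5} \bigr) < N \leqslant x^{1/6}$, where \eqref{e5} is available and $N \leqslant x^{1/5} = (x^{1/2})^{2/5}$, I would apply Lemma~\ref{le5} with $s = \tfrac52$ and $X = x^{1/2}$, which yields $\mathcal{R} \ll x^{1/14} N^{1/14} + y x^{-10/21} N^{4/21}$; both summands grow with $N$ and attain, at the endpoint $N = x^{1/6}$, the values $x^{1/12}$ and $y x^{-4/9}$ exactly. Finally, on the bottom range $(16 y^2 x^{-1})^{1/5} < N \leqslant N_\ast$, which is non-empty only for $y > x^{1/2}$ and on which the Filaseta--Trifonov estimate is unavailable, I would invoke the fifth derivative test (Lemma~\ref{le4}), whose leading term $N\lambda_5^{1/15} \asymp x^{1/30} N^{1/2}$ increases in $N$ and is therefore largest at $N = N_\ast$.

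The decisive balance sits at this last endpoint: $x^{1/30} N_\ast^{1/2} = c_3^{-1} y x^{-7/15}$, so the constant cancels and this term is $\ll x^{1/12}$ precisely when $y \leqslant c_3 x^{11/20}$ (equivalently $N_\ast \leqslant x^{1/10}$), which is exactly the hypothesis and accounts for the constant $c_3 = c_3(\tfrac52)$ of \eqref{e5} appearing in the statement. The two leftover fifth-derivative terms $y^{1/6} N^{7/12} x^{-1/12}$ and $y^{1/4} N x^{-1/4}$ are checked to be $\ll x^{1/12}$ on the same range. Assembling the three estimates gives $\max_N \mathcal{R} \ll x^{1/12} + y x^{-4/9}$, hence the first bound; for the second it suffices to note that $y \leqslant c_3 x^{19/36}$ forces $y x^{-4/9} \ll x^{1/12}$ because $\tfrac{19}{36} - \tfrac49 = \tfrac1{12}$, so the $y$-term is absorbed. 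I expect the main obstacle to be the bookkeeping that stitches the three regimes together: one has to confirm that for $y \leqslant x^{1/2}$ the bottom range is empty and Lemma~\ref{le5} alone covers everything below $x^{1/6}$, that the fifth-derivative and Filaseta--Trifonov ranges meet cleanly at $N_\ast \leqslant x^{1/10} \leqslant x^{1/6}$, and above all that each hypothesis of Lemmas~\ref{le4} and \ref{le5}, in particular \eqref{e5} and $N \leqslant X^{1/s}$, is genuinely satisfied in the precise subrange where that lemma is invoked.
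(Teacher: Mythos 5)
Your proposal is correct and follows essentially the same route as the paper: Lemma~\ref{le6} combined with a three-way split of the range of $N$, using the fifth derivative test (Lemma~\ref{le4}) at the bottom, the Filaseta--Trifonov bound (Lemma~\ref{le5}) with $X=x^{1/2}$, $s=\tfrac52$ in the middle, and the first derivative test \eqref{e4} at the top, with the same endpoint evaluations producing $x^{1/12}+yx^{-4/9}$. The only difference is cosmetic: you cut the bottom/middle ranges at the natural threshold $N_\ast = y^2 c_3^{-2}x^{-1}$ where \eqref{e5} degenerates, whereas the paper cuts at the fixed point $2x^{1/10}$ (which dominates $N_\ast$ under the hypothesis $y \leqslant c_3 x^{11/20}$), and both choices yield identical bounds.
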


\begin{proof}
We split the first term in Lemma~\ref{le6} into three parts, according to the ranges
$$\left( 16y^2x^{-1} \right)^{1/5} < N \leqslant 2x^{1/10}, \quad 2x^{1/10} < N \leqslant 2x^{1/6} \quad \textrm{and} \quad 2x^{1/6} < N \leqslant (2x)^{1/5}.$$
In the first case, we use Lemma~\ref{le4} with $\lambda_4 = \left( x N^{-13} \right)^{1/2}$ and $\lambda_5 = \left( x N^{-15} \right)^{1/2}$ which yields
$$\underset{\left( 16y^2x^{-1} \right)^{1/5} <N \leqslant 2x^{1/10}}{\max} \ \mathcal{R} \left( \sqrt{\frac{x}{n^5}},N,\frac{y}{\sqrt{N^5x}}\right) \ll x^{1/12} + x^{-1/40} y^{1/6} + x^{-3/20} y^{1/4}.$$
For the second range, we use Lemma~\ref{le5} with $X=x^{1/2}$, $s = \frac{5}{2}$ and $\delta = y \left( N^5 x \right)^{-1/2}$. Notice that the conditions $N > 2x^{1/10}$ and $y \leqslant c_3 \, x^{11/20}$ ensure that $\delta < \frac{1}{4}$ and $N^2 \delta \leqslant c_3$. We get 
$$\underset{2x^{1/10} < N \leqslant 2x^{1/6}}{\max} \ \mathcal{R} \left( \sqrt{\frac{x}{n^5}},N,\frac{y}{\sqrt{N^5x}}\right) \ll x^{1/12} + yx^{-4/9}.$$
The last range is easily treated with \eqref{e4}, giving
$$\underset{2x^{1/6} < N \leqslant \left( 2x \right)^{1/5}}{\max} \ \mathcal{R} \left( \sqrt{\frac{x}{n^5}},N,\frac{y}{\sqrt{N^5x}}\right) \ll x^{1/12} + yx^{-3/4}.$$
Using Lemma~\ref{le6}, we finally get
$$\sum_{x < n \leqslant x+y} \left( \lambda_5 \star \mathbf{1} \right) (n) \ll \left( x^{1/12} + x^{-1/40} y^{1/6} + x^{-3/20} y^{1/4} + yx^{-4/9} \right) \log x  + x^{-1/5} y^{2/5}$$
and note that $x^{-1/40} y^{1/6} + x^{-3/20} y^{1/4} + x^{-1/5} y^{2/5} \ll x^{1/12}$ as soon as $y \leqslant x^{13/20}$. This completes the proof of the first estimate, the second one being obvious.
\end{proof}

\section{Acknowledgments}

The author deeply thanks Prof. Kannan Soundararajan for the help he gave him to adapt his result to the function $L(x)$, and Benoit Cloitre for bringing this problem to his attention.

\end{document}